\theoremstyle{definition}
\newtheorem{definition}{Definition}
\newtheorem{examp}{Example}
\theoremstyle{plain}
\newtheorem{Theorem}{Theorem}
\newtheorem{Lemma}[definition]{Lemma}
\newtheorem{corollary}{Corollary}
\theoremstyle{remark}
\newcommand{\C}{\mathbb{C}}
\renewcommand{\H}{\mathbb{H}}
\newcommand{\K}{\mathbb{K}}
\newcommand{\N}{\mathbb{N}}
\newcommand{\Q}{\mathbb{Q}}
\newcommand{\R}{\mathbb{R}}
\newcommand{\Z}{\mathbb{Z}}
\newcommand{\oh}{\mathcal{\scriptstyle{O}}}
\let\leq\leqslant
\let\geq\geqslant
\begin{document}

\begin{center}
\begin{huge}
\begin{spacing}{1.0}
\textbf{Hermitian theta series and Maa\ss\;spaces under the action of the maximal discrete extension of the Hermitian modular group}  
\end{spacing}
\end{huge}

\bigskip
by
\bigskip

\begin{large}
\textbf{Annalena Wernz\footnote{Lehrstuhl A für Mathematik, RWTH Aachen University, D-52056 Aachen, Germany \\ annalena.wernz@rwth-aachen.de \\
\hspace*{-1ex}Annalena Wernz was partially supported by Graduiertenkolleg Experimentelle und konstruktive Algebra at RWTH Aachen University.}}
\end{large}
\vspace{0.5cm}\\
\vspace{1cm}
\end{center}
%\begin{abstract}
\noindent\textbf{Abstract.}
Let $\Gamma_n(\oh_{\K})$ denote the Hermitian modular group of degree $n$ over an imaginary quadratic number field $\K$ and  $\Delta_{n,\K}^*$ its maximal discrete extension in the special unitary group $SU(n,n;\C)$. In this paper we study the action of $\Delta_{n,\K}^*$ on Hermitian theta series and Maa\ss\;spaces. For $n=2$ we will find theta lattices such that the corresponding theta series are modular forms with respect to $\Delta_{2,\K}^*$ as well as examples where this is not the case. Our second focus lies on studying two different Maa\ss\;spaces. We will see that the new found group $\Delta_{2,\K}^*$ consolidates the different definitions of the spaces.
%\end{abstract}
\medskip

\noindent\textbf{Keywords.} Hermitian modular group, maximal discrete extension, theta series, Hermitian modular forms, Maa\ss\;spaces
\vspace{2ex}\\
\noindent\textbf{Mathematics Subject Classification.} 11F55

\newpage

\noindent\textbf{1. Introduction.} In 1950, Hel Braun introduced the Hermitian modular group $\Gamma_n(\oh_{\K})$ as a generalization of the elliptic modular group \cite{braun1}. Its maximal discrete extension $\Delta_{n,\K}^*$ in $SU(n,n;\C)$, called the extended Hermitian modular group, was determined in \cite{kriegraumwernz}. We study the action of this extended group on theta series $\Theta(Z,\Lambda)^{(n)}$ where $Z$ is an element of the Hermitian half space $\H_n$ and $\Lambda$ is a theta lattice. We will find a sufficient condition such that $\Theta(Z,\Lambda)^{(n)}$ is a modular form with respect to $\Delta_{n,\K}^*$ for all $n\in \N$ and a less strict condition such that it is a modular form for a fixed $n\in \N$. For $n=2$ we consider specific theta lattices for which the corresponding theta series is a modular form with respect to $\Delta_{2,\K}^*$ as well as examples where this is not the case. Furthermore, we study Maa\ss\;spaces introduced by Sugano in 1985 \cite{sugano} and Krieg in 1991 \cite{krieg}. It is known that Krieg's Maa\ss\;space is always a subspace of Suganos' Maa\ss\;space. Moreover, there are known cases for which they coincide as well as for which they differ. By studying the extended Hermitian modular group once more, we will prove that they coincide if and only if all elements in Sugano's Maa\ss\;space are modular forms with respect to $\Delta_{n,\K}^*$.\\

\noindent\textbf{2. Hermitian modular forms.} For a matrix $M$ let $\overline{M}$ and $M^{tr}$ denote the complex conjugate and transposed of $M$, respectively. We write $M>0$ if $M$ is positive definite and $M\geq0$ if $M$ is positive semidefinite. Define the \textit{special unitary group} $SU(n,n;\C)$ as the set of all matrices
\begin{align}\label{text1}
 M=\begin{pmatrix}A&B\\C&D\end{pmatrix} \in SL_{2n}(\C)\quad \text{with}\quad J[M]:=\overline{M}^{tr}JM=J,
\end{align}
where $J=\left(\begin{smallmatrix}0&-I\\I&0\end{smallmatrix}\right)$ and $I$ denotes the $n\times n$-identity matrix. Then $SU(n,n;\C)$ acts on the Hermitian half space
\begin{align*}
 \H_n:=\left\{Z\in \C^{n\times n},\; \frac{1}{2i}(Z-\overline{Z}^{tr}) >0\right\}
\end{align*}
via
\begin{align*}
 (M,Z)\mapsto M\langle Z\rangle :=(AZ+B)(CZ+D)^{-1}
\end{align*}
with $M=\left(\begin{smallmatrix}A&B\\C&D\end{smallmatrix}\right)$ as in (\ref{text1}). For $m\in \N$, $m$ squarefree, we consider the imaginary quadratic number field $\K:=\Q(\sqrt{-m})\subset \C$ with discriminant 
\begin{align*}
 d_{\K}=\begin{cases} -4m,\quad &m\not\equiv3 \;(\text{mod }4),\\
         -m,\quad &m\equiv3 \;(\text{mod }4).
        \end{cases}
\end{align*}
Its ring of integers is given by 
\begin{align*}
 \oh_{\K}=\Z+\omega_{\K}\Z=\begin{cases} \Z+\sqrt{-m}\Z,\quad &m\not\equiv3 \;(\text{mod }4),\\
         \Z+\frac{1+\sqrt{-m}}{2}\Z,\quad &m\equiv3 \;(\text{mod }4).
        \end{cases}
\end{align*}
By $\oh_{\K}^*$ we denote its unit group. The \textit{Hermitian modular group} of degree $n$ is
\begin{align*}
 \Gamma_n(\oh_{\K}):=SU(n,n;\C)\cap \oh_{\K}^{2n\times 2n},
\end{align*}
where $\Gamma_1(\oh_{\K})=SL_2(\Z)$. By \cite{kriegraumwernz} it is known that its maximal discrete extension in $SU(n,n;\C)$ is given by 
\begin{align*}
 \Delta_{n,\K}^*=\bigcup_{A \in \mathcal{A}}M_A \Gamma_n(\oh_{\K}),
\end{align*}
where 
\begin{align}\label{text2}
M_A=\left(\begin{smallmatrix}A&0\\0&\overline{A}^{-tr}\end{smallmatrix}\right)\hspace{-0.1cm},\,
\mathcal{A}=\left\{\hspace{-0.05cm}A=\frac{1}{u}L\in SL_n(\C);\, u\in \C,\, L\in \oh_{\K}^{n \times n}, \, \mathcal{I}(L)^n=\oh_{\K}\det(L)\hspace{-0.05cm}\right\}
\end{align}
and $\mathcal{I}(L)$ denotes the ideal generated by the entries of $L$. Particularly, $\mathcal{I}(L)^n=\oh_{\K}\det(L)$ already implies
\begin{align*}
 N(\mathcal{I}(L))^n=N(\mathcal{I}(L)^n)=N(\oh_{\K}\det(L))=N(\oh_{\K}u^n)=(u\overline{u})^n,
\end{align*}where $N(\mathcal{I})$ denotes the norm of an ideal $\mathcal{I}$ in $\oh_{\K}$. A holomorphic function $f:\H_n\to \C$ is called a \textit{Hermitian modular form} of degree $n$ and weight $r\in \Z$ if
\begin{align*}
 f(M\langle Z\rangle)=\det(CZ+D)^{-r}f(Z)
\end{align*}
for all $Z\in \H_n$ and $M\in \Gamma_n(\oh_{\K})$ and $f$ is bounded on $\{Z\in \H_1,\,\Im(Z)\geq1\}$ if $n=1$. The set of all Hermitian modular forms of degree $n$ and weight $r$ forms a finite dimensional vector space which we denote by $[\Gamma_n(\oh_{\K}),r]$. We define $[\Delta_{n,\K}^*,r]$ analogously.
\mbox{}\\

\noindent\textbf{3. Theta series.} In the following we study theta series. Let $V\subseteq \C^r$ be an $r$-dimensional $\K$-vector space with the standard Hermitian form
\begin{align*}
 h:\C^r\times \C^r\to \C,\;(x,y)\mapsto \overline{x}^{tr}y.
\end{align*}
We call a set $\Lambda\subseteq V$ an \textit{$\oh_{\K}$-lattice} of rank $r$ if $\Lambda$ is an $\oh_{\K}$-module and a $\Z$-lattice of rank $2r$. We consider
\begin{align*}
 F_h:\C^r\times \C^r \to \R,\,(x,y)\mapsto F_h(x,y)=\Re(h(x,y)).
\end{align*}
Then $(\Lambda,F_h)$ is a positive definite $\Z$-lattice. If $(\Lambda,F_h)$ is even and unimodular, then $(\Lambda,h)$ is called a \textit{theta lattice} of rank $r$. Let $(\Lambda, h)$ be a theta lattice. Then by \cite{cohenresnikoff} it is known that the associated \textit{theta series}
\begin{align*}
 \Theta(Z,\Lambda)^{(n)}:=\sum_{T\geq0}\#(\Lambda,T)e^{i\pi tr(TZ)}
\end{align*}
with Fourier coefficients
\begin{align*}
 \#(\Lambda,T)=\#\{\lambda=(\lambda_1,\ldots,\lambda_n)\in \Lambda^n,\;\overline{\lambda}^{tr}\lambda=T\}\in \N_0
\end{align*}
is a Hermitian modular form of degree $n$ and weight $r$. Clearly the theta series associated to theta lattices $\Lambda$ and $\Lambda'$ coincide whenever $\Lambda$ and $\Lambda'$ are isometric. Considering
\begin{align*}
 \Theta(Z^{tr},\Lambda)^{(n)}=\Theta(Z,\overline{\Lambda})^{(n)},
\end{align*}
where $\overline{\Lambda}$ denotes the complex conjugate of $\Lambda$, we see that applying an automorphism of the Hermitian half space can yield a theta series with respect to an altered theta lattice. In the following, we study $\Lambda$ under the action of $\Delta_{n,\K}^*$.

\begin{Theorem}\label{Theorem1}
Let $(\Lambda,h)$ be a theta lattice of rank $r$ and $M\in \Delta_{n,\K}^*$ of the form (\ref{text2}). Then 
\begin{align*}
\Theta(M\langle Z\rangle,\Lambda)^{(n)}=\Theta(Z,\Lambda')^{(n)}, 
\end{align*}
where $\Lambda'=\frac{1}{\sqrt{N(\mathcal{I}(L))}}\mathcal{I}(L)\Lambda$. Moreover, $\Lambda'$ is a theta lattice.
\end{Theorem}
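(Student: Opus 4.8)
The plan is to unfold the theta series and translate the identity into a statement about $\oh_{\K}$-lattices. Writing the defining sum as
\[
\Theta(Z,\Lambda)^{(n)}=\sum_{\lambda\in\Lambda^{n}}e^{i\pi\operatorname{tr}(\overline{\lambda}^{tr}\lambda\,Z)},
\]
where $\lambda=(\lambda_1,\dots,\lambda_n)$ is viewed as the $r\times n$ matrix with columns in $\Lambda$ and $\overline{\lambda}^{tr}\lambda$ is its Gram matrix, I would first compute the action of $M=M_A$. Since $B=C=0$ and $D=\overline{A}^{-tr}$ one gets $M\langle Z\rangle=AZ\overline{A}^{tr}$, and by cyclicity of the trace together with $\overline{A}^{tr}\overline{\lambda}^{tr}=\overline{\lambda A}^{tr}$,
\[
\operatorname{tr}\!\bigl(\overline{\lambda}^{tr}\lambda\,AZ\overline{A}^{tr}\bigr)=\operatorname{tr}\!\bigl(\overline{\mu}^{tr}\mu\,Z\bigr),\qquad\mu:=\lambda A.
\]
Thus $\Theta(M\langle Z\rangle,\Lambda)^{(n)}=\sum_{\lambda\in\Lambda^{n}}e^{i\pi\operatorname{tr}(\overline{\mu}^{tr}\mu Z)}$, and since $A\in SL_n(\C)$ is invertible the map $\lambda\mapsto\mu$ is injective; everything reduces to identifying the image $\Lambda^{n}A=\{\lambda A:\lambda\in\Lambda^{n}\}$.

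The crux is the module identity $\{\lambda L:\lambda\in\Lambda^{n}\}=(\mathcal{I}(L)\Lambda)^{n}$, which gives $\Lambda^{n}A=\tfrac1u(\mathcal{I}(L)\Lambda)^{n}$. The inclusion ``$\subseteq$'' is immediate, since each column $\sum_iL_{ij}\lambda_i$ of $\lambda L$ has coefficients $L_{ij}\in\mathcal{I}(L)$. For ``$\supseteq$'' I would localize: both sides are $\oh_{\K}$-lattices, so it suffices to check equality at every prime $\mathfrak{p}$, where $\oh_{\K,\mathfrak p}$ is a discrete valuation ring, $\Lambda_{\mathfrak p}$ is free, and $\mathcal{I}(L)_{\mathfrak p}=(\pi^{a})$ is principal. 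This is the one place where the hypothesis $\mathcal{I}(L)^{n}=\oh_{\K}\det(L)$ enters decisively. In a Smith normal form $L=U\operatorname{diag}(\pi^{a_1},\dots,\pi^{a_n})V$ over $\oh_{\K,\mathfrak p}$ with $U,V$ invertible and $a_1\le\dots\le a_n$, the entries generate $(\pi^{a_1})=\mathcal{I}(L)_{\mathfrak p}$ while $(\det L)_{\mathfrak p}=(\pi^{a_1+\dots+a_n})$, so the hypothesis forces $na_1=\sum_ia_i$ and hence $a_1=\dots=a_n$. Therefore $L_{\mathfrak p}=\pi^{a_1}L_0$ with $L_0\in GL_n(\oh_{\K,\mathfrak p})$, whence $\Lambda_{\mathfrak p}^{n}L=\pi^{a_1}\Lambda_{\mathfrak p}^{n}=(\mathcal{I}(L)\Lambda)^{n}_{\mathfrak p}$; assembling over all $\mathfrak p$ yields the global identity. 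I expect this local analysis to be the main obstacle, as it is where the precise condition on $L$ is needed.

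To finish the identity I would match normalizations. By the Remark preceding the theorem $N(\mathcal{I}(L))^{n}=(u\overline{u})^{n}$, so $\sqrt{N(\mathcal{I}(L))}=|u|$ and $\Lambda^{n}A=\tfrac1u(\mathcal{I}(L)\Lambda)^{n}=\zeta\,(\Lambda')^{n}$ with $\zeta=|u|/u$ of modulus one. Because scalar multiplication by a unit-modulus number is an isometry for $h$, one has $\overline{\zeta\nu}^{tr}(\zeta\nu)=\overline{\nu}^{tr}\nu$, so reindexing the sum by $\nu\in(\Lambda')^{n}$ produces exactly $\Theta(Z,\Lambda')^{(n)}$, proving the displayed equality.

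It remains to verify that $\Lambda'=\tfrac1{\sqrt{N(\mathcal{I}(L))}}\mathcal{I}(L)\Lambda$ is again a theta lattice. It is evidently an $\oh_{\K}$-lattice of rank $r$, and positive definiteness is inherited from $\Lambda$. Unimodularity is the quantitative check that pins down the normalization: $\mathcal{I}(L)\Lambda$ has $\Z$-index $N(\mathcal{I}(L))^{r}$ in $\Lambda$, hence $\Z$-discriminant $N(\mathcal{I}(L))^{2r}$, and rescaling by the real factor $N(\mathcal{I}(L))^{-1/2}$ multiplies the $2r\times 2r$ Gram determinant by $N(\mathcal{I}(L))^{-2r}$, returning discriminant $1$. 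For evenness I would use the local description $(\mathcal{I}(L)\Lambda)_{\mathfrak p}=\pi^{a}\Lambda_{\mathfrak p}$ from the previous step together with the conjugate-ideal identity $\mathcal{I}(L)\,\overline{\mathcal{I}(L)}=N(\mathcal{I}(L))\oh_{\K}$ to show that $h(w,w)\in 2N(\mathcal{I}(L))\Z$ for every $w\in\mathcal{I}(L)\Lambda$, so that the rescaled norm form remains even. With $(\Lambda',F_h)$ positive definite, even and unimodular, $\Lambda'$ is a theta lattice, which completes the proof.
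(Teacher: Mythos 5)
Your proof is correct, and its skeleton -- reduce the automorphy statement to an identification of the lattice $\Lambda^n A$, then reindex the theta sum -- is the same as the paper's. The genuine difference lies in how the crucial module identity $\Lambda^n A=\tfrac1u\big(\mathcal{I}(L)\Lambda\big)^n$ is established. The paper argues group-theoretically: it sets $\Lambda_1'\times\cdots\times\Lambda_n':=\Lambda^n A$, replaces the coset representative $A$ by $AU$ for permutation matrices $U$ (allowed because $\Gamma_n(\oh_{\K})$ is normal in $\Delta_{n,\K}^*$) to conclude $\Lambda_1'=\cdots=\Lambda_n'$, and then identifies the common factor with $\tfrac1u\mathcal{I}(L)\Lambda$ ``by definition of $\mathcal{I}(L)$''. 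You instead localize at each prime $\mathfrak{p}$ and use the Smith normal form over the discrete valuation ring $\oh_{\K,\mathfrak{p}}$, where the hypothesis $\mathcal{I}(L)^n=\oh_{\K}\det(L)$ forces all elementary divisors to coincide, so $L_{\mathfrak{p}}=\pi^{a}L_0$ with $L_0$ invertible. This has two advantages: it makes fully explicit where the defining condition on $\mathcal{A}$ in (\ref{text2}) enters (the paper leaves this implicit), and it proves surjectivity of $\lambda\mapsto\lambda A$ onto $(\Lambda^*)^n$ without implicitly presupposing that $\Lambda^n A$ is a product of lattices; the cost is standard commutative-algebra machinery. You also treat unimodularity of $\Lambda'$ differently: the paper deduces self-duality $\Lambda'^{\#}=\Lambda'$ directly from the definition and unimodularity of $\Lambda$, whereas you run an index/discriminant count ($[\Lambda:\mathcal{I}(L)\Lambda]=N(\mathcal{I}(L))^{r}$, then rescale), which is valid but needs your evenness (integrality) step to convert discriminant one into self-duality. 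Your evenness argument, resting on $\mathcal{I}(L)\overline{\mathcal{I}(L)}=N(\mathcal{I}(L))\oh_{\K}$, is essentially identical to the paper's.
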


Particularly, the theta series coincide whenever $\Lambda$ and $\Lambda'$ are isometric.

\begin{proof}
 As $\Lambda'$ is an $\oh_{\K}$-module by definition and we have
 \begin{align*}
  \sqrt{N(\mathcal{I}(L))}\Lambda \subseteq \Lambda'\subseteq\frac{1}{\sqrt{N(\mathcal{I}(L))}}\Lambda,
 \end{align*}
$\Lambda'$ is an $\oh_{\K}$-lattice of rank $r$. That $(\Lambda',F_h)$ is even can be shown by considering $F_h(\lambda',\lambda')$ with arbitrary $\lambda'=\frac{1}{u}\sum_j l_j\lambda_j$ with $l_j\in \mathcal{I}(L)$ and $\lambda_j\in\Lambda$ and using $\overline{l_j}l_k\in N(\mathcal{I}(L))\oh_{\K}$. Denote by $\Lambda'^{\#}$ the dual lattice of $\Lambda'$, then $\Lambda'^{\#}=\Lambda'$ follows directly from the definition and the fact that $\Lambda$ is unimodular. Hence, $\Lambda'$ is a theta lattice.

Let $\Lambda_1'\times \ldots \times \Lambda_n':=\Lambda^n A$. Then $\Lambda_j'$ does not depend on the representative $A$ as a multiplication by a unimodular matrix does not change the lattice. If $i \neq j \in \{1,\ldots,n\}$, let $U$ denote the permutation matrix of $i$ and $j$. Then $AU$ is a representative of the same coset as $A$ because the modular group ist normal in $\Delta_{n,\K}^*$ due to \cite{kriegraumwernz}. We obtain $\Lambda_j'=\Lambda_i'$ and hence $\Lambda_1'=\ldots=\Lambda_n'$. By definition of $\mathcal{I}(L)$ we immediately obtain
\begin{align*}
 \Lambda_1'=\ldots=\Lambda_n'=\frac{1}{u}\mathcal{I}(L)\Lambda=:\Lambda^*
\end{align*}
with $u\in \C$ as in (\ref{text2}). Now let $Z\in \H_n$. We consider 
\begin{align*}
 \Theta(M\langle Z\rangle,\Lambda)^{(n)}=\sum_{T\geq 0}\#(\Lambda,T)e^{i\pi tr(TM\langle Z \rangle)}=\sum_{T\geq 0}\#(\Lambda,T)e^{i\pi tr(T^*Z)}
\end{align*}
with $T^*=\overline{A}^{tr}TA\geq0$. Since 
\begin{align*}
 \Lambda^n\to {\Lambda^*}^n,\,(\lambda_1,\ldots,\lambda_n)\mapsto (\lambda_1,\ldots,\lambda_n)A
\end{align*}
is a bijection, we obtain $\#(\Lambda,T)=\#(\Lambda^*,T^*)$ and thus
\begin{align*}
 \Theta(M\langle Z\rangle,\Lambda)^{(n)}=\sum_{T\geq 0}\#(\Lambda,T)e^{i\pi tr(T^*Z)}=\sum_{T\geq 0}\#(\Lambda^*,T^*)e^{i\pi tr(T^*Z)}= \Theta(Z,\Lambda^*)^{(n)}.
\end{align*}
As $\Lambda^*=\frac{1}{u}\mathcal{I}(L)\Lambda \cong \frac{1}{\sqrt{N(\mathcal{I}(L))}}\mathcal{I}(L)\Lambda=\Lambda'$, that is, $\Lambda^*$ and $\Lambda'$ are isometric, we obtain
\begin{align*}
  \Theta(M\langle Z\rangle,\Lambda)^{(n)}=\Theta(Z,\Lambda')^{(n)}.
\end{align*}
\end{proof} 

In light of Theorem \ref{Theorem1} and based on the definition in \cite{quebbemann} we call a theta lattice $(\Lambda,h)$ \textit{strongly modular} if for all integral ideals $\mathcal{I}\leq \oh_{\K}$ we have $\Lambda\cong \frac{1}{\sqrt{N(\mathcal{I})}}\mathcal{I}\Lambda$. The following Theorem follows directly from this definition and Theorem \ref{Theorem1}.

\begin{Theorem}\label{Theorem2}
 If a theta lattice $\Lambda$ of rank $r$ is strongly modular, the corresponding theta series $\Theta(Z,\Lambda)^{(n)}$ is a modular form of weight $r$ with respect to the maximal discrete extension $\Delta_{n,\K}^*$ for all $n\in \N$.
\end{Theorem}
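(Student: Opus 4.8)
The plan is to reduce modularity over the whole group $\Delta_{n,\K}^*$ to two ingredients that are already available: modularity over $\Gamma_n(\oh_{\K})$, guaranteed by \cite{cohenresnikoff}, and the transformation behaviour of $\Theta(Z,\Lambda)^{(n)}$ under the coset representatives $M_A$ from (\ref{text2}). Writing an arbitrary $M\in\Delta_{n,\K}^*$ as $M=M_A\gamma$ with $A\in\mathcal{A}$ and $\gamma\in\Gamma_n(\oh_{\K})$, I would exploit the cocycle relation $j(M_A\gamma,Z)=j(M_A,\gamma\langle Z\rangle)\,j(\gamma,Z)$ satisfied by the automorphy factor $j(M,Z):=\det(CZ+D)$, so that the desired transformation law decomposes into the corresponding laws for $M_A$ and for $\gamma$.

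The decisive observation is that the automorphy factor of $M_A=\left(\begin{smallmatrix}A&0\\0&\overline{A}^{-tr}\end{smallmatrix}\right)$ is trivial: since $A\in SL_n(\C)$ its lower right block satisfies $\det(\overline{A}^{-tr})=\overline{\det A}^{-1}=1$, whence $j(M_A,Z)\equiv 1$. Thus the transformation law under $M_A$ collapses to the plain invariance $\Theta(M_A\langle Z\rangle,\Lambda)^{(n)}=\Theta(Z,\Lambda)^{(n)}$. This is exactly where Theorem \ref{Theorem1} and the hypothesis of strong modularity come in: by Theorem \ref{Theorem1} one has $\Theta(M_A\langle Z\rangle,\Lambda)^{(n)}=\Theta(Z,\Lambda')^{(n)}$ with $\Lambda'=\frac{1}{\sqrt{N(\mathcal{I}(L))}}\mathcal{I}(L)\Lambda$, and since $\mathcal{I}(L)$ is an integral ideal of $\oh_{\K}$ by construction, strong modularity yields $\Lambda\cong\Lambda'$. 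As isometric theta lattices produce identical theta series, the invariance follows.

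Combining the pieces, for $M=M_A\gamma$ with $\gamma=\left(\begin{smallmatrix}a&b\\c&d\end{smallmatrix}\right)$ I would compute $\Theta(M\langle Z\rangle,\Lambda)^{(n)}=\Theta(M_A\langle\gamma\langle Z\rangle\rangle,\Lambda)^{(n)}=\Theta(\gamma\langle Z\rangle,\Lambda)^{(n)}=\det(cZ+d)^{-r}\Theta(Z,\Lambda)^{(n)}$, using first the invariance under $M_A$ and then modularity under $\gamma$; the triviality $j(M_A,\cdot)\equiv 1$ identifies $\det(cZ+d)$ with the full automorphy factor of $M$ via the cocycle relation, giving the weight-$r$ transformation law. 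Holomorphy is inherited from $\Theta(Z,\Lambda)^{(n)}$, and for $n=1$ the required boundedness on $\{Z\in\H_1,\,\Im(Z)\geq 1\}$ is already known since $\Theta(Z,\Lambda)^{(1)}$ is a modular form. I do not expect a genuine obstacle: once the automorphy factor of $M_A$ is seen to be trivial, the statement is essentially a formal consequence of Theorem \ref{Theorem1}; the only point deserving care is verifying that strong modularity, stated for arbitrary integral ideals, indeed applies to $\mathcal{I}(L)$.
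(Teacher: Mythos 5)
Your proposal is correct and takes essentially the same route as the paper: the paper derives Theorem \ref{Theorem2} directly from the definition of strong modularity together with Theorem \ref{Theorem1}, which is exactly your argument --- you merely spell out the details the paper leaves implicit (the coset decomposition $M=M_A\gamma$, the cocycle relation, and the triviality of the automorphy factor $j(M_A,Z)\equiv 1$ coming from $A\in SL_n(\C)$).
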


By applying Theorem 1 from \cite{kriegraumwernz}, we obtain the following corollary for fixed $n\in \N$.

\begin{corollary}\label{Corollary1}
Let $n\in \N$. Then $\Theta(Z,\Lambda)^{(n)}$ is a modular form with respect to $\Delta_{n,\K}^*$ if $\frac{1}{\sqrt{N(\mathcal{I})}}\mathcal{I}\Lambda\cong \Lambda$ for all integral ideals $\mathcal{I}$ whose order in the class group is a divisor of $n$. 
\end{corollary}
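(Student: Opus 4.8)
The plan is to test the transformation law only on the coset representatives $M_A$ and to let Theorem \ref{Theorem1} do the rest. Since $\Theta(Z,\Lambda)^{(n)}$ is already a Hermitian modular form of weight $r$ for $\Gamma_n(\oh_{\K})$ and $\Delta_{n,\K}^*=\bigcup_{A\in\mathcal{A}}M_A\Gamma_n(\oh_{\K})$, any $M\in\Delta_{n,\K}^*$ can be written $M=M_A\gamma$ with $\gamma\in\Gamma_n(\oh_{\K})$, and the cocycle relation for $\det(CZ+D)$ reduces the full transformation law to the single identity for $M_A$. First I would note that the automorphy factor of $M_A=\left(\begin{smallmatrix}A&0\\0&\overline{A}^{-tr}\end{smallmatrix}\right)$ is trivial: here $C=0$ and $D=\overline{A}^{-tr}$, and $A\in SL_n(\C)$ forces $\det(\overline{A}^{-tr})=\overline{\det A}^{-1}=1$, so $\det(CZ+D)^{-r}=1$. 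Thus it suffices to prove $\Theta(M_A\langle Z\rangle,\Lambda)^{(n)}=\Theta(Z,\Lambda)^{(n)}$ for every $A\in\mathcal{A}$.

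By Theorem \ref{Theorem1}, $\Theta(M_A\langle Z\rangle,\Lambda)^{(n)}=\Theta(Z,\Lambda')^{(n)}$ with $\Lambda'=\frac{1}{\sqrt{N(\mathcal{I}(L))}}\mathcal{I}(L)\Lambda$, so the claim reduces to $\Lambda'\cong\Lambda$. The key step is to read off which ideals $\mathcal{I}(L)$ can occur. Writing $A=\frac{1}{u}L$ as in (\ref{text2}), the ideal $\mathcal{I}(L)$ is integral, since its generators are the entries of $L\in\oh_{\K}^{n\times n}$, and it satisfies $\mathcal{I}(L)^n=\oh_{\K}\det(L)$; hence $\mathcal{I}(L)^n$ is principal, i.e.\ the class of $\mathcal{I}(L)$ has order dividing $n$ in the class group. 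The hypothesis of the corollary therefore applies with $\mathcal{I}=\mathcal{I}(L)$ and gives $\Lambda'\cong\Lambda$; as isometric theta lattices have identical theta series, $\Theta(M_A\langle Z\rangle,\Lambda)^{(n)}=\Theta(Z,\Lambda)^{(n)}$ follows.

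Given Theorem \ref{Theorem1}, the only real content is this translation of the algebraic condition $\mathcal{I}(L)^n=\oh_{\K}\det(L)$ into the class-group statement; here I would invoke Theorem 1 of \cite{kriegraumwernz}, which describes the cosets $M_A\Gamma_n(\oh_{\K})$ precisely in terms of these ideal classes of order dividing $n$, to be sure that no further lattices $\Lambda'$ need to be tested and that the hypothesis is phrased at the correct level of generality. Combining the trivial automorphy factor on the $M_A$, the reduction $\Theta(M_A\langle Z\rangle,\Lambda)^{(n)}=\Theta(Z,\Lambda)^{(n)}$, the $\Gamma_n(\oh_{\K})$-modularity of $\Theta(Z,\Lambda)^{(n)}$ and the cocycle relation then yields $\Theta(M\langle Z\rangle,\Lambda)^{(n)}=\det(CZ+D)^{-r}\Theta(Z,\Lambda)^{(n)}$ for all $M\in\Delta_{n,\K}^*$, which is the assertion.
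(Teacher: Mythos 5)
Your proposal is correct and follows essentially the route the paper intends: reduce to the coset representatives $M_A$, apply Theorem \ref{Theorem1} to replace $\Lambda$ by $\Lambda'=\frac{1}{\sqrt{N(\mathcal{I}(L))}}\mathcal{I}(L)\Lambda$, and observe that the defining condition $\mathcal{I}(L)^n=\oh_{\K}\det(L)$ in (\ref{text2}) makes $\mathcal{I}(L)^n$ principal, so the isometry hypothesis applies. The only (harmless) refinement is that your direct reading of (\ref{text2}) already gives sufficiency, so the appeal to Theorem 1 of \cite{kriegraumwernz} is needed only to know that \emph{every} ideal class of order dividing $n$ actually occurs, i.e.\ for the sharpness of the hypothesis rather than for the implication itself.
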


Strong modularity is a restrictive requirement but allows us to make a statement for arbitrary degree. As seen in Theorem \ref{Theorem1}, strong modularity is not a necessary condition for $\Theta(Z,\Lambda)^{(n)}$ to be a modular form with respect to $\Delta_{n,\K}^*$. Isometry is at most necessary for all integral ideals which can be generated by matrices $L$ of the form (\ref{text2}). For fixed $n\in \N$ we know from \cite{kriegraumwernz} that these are precisely those ideals in $\oh_{\K}$ whose $n$-th power is a principal ideal, as was formulated in Corollary \ref{Corollary1}. These can be constructed explicitely for $n=2$. In this case we have
\begin{align*}
 \Delta_{2,\K}^*=\bigcup_{\substack{d|d_{\K}\\ d \;\Box-\text{free}}}\Gamma_2(\oh_{\K})W_d
\end{align*}
with $W_d=\left(\begin{smallmatrix}V_d&0\\0&\overline{V_d}^{-tr}\end{smallmatrix}\right)$, where $V_d$ denote the Atkin-Lehner involutions
\begin{align*}
 V_d=\frac{1}{\sqrt{d}}\begin{pmatrix}\alpha d&\beta (m+\sqrt{-m})\\ \gamma(m-\sqrt{-m})&\delta d\end{pmatrix}\in SL_2(\C),\;\alpha,\beta,\gamma,\delta \in \Z,
\end{align*}
\cite{wernz},  \cite{kriegraumwernz}. Because of $V_dSL_2(\oh_{\K})=SL_2(\oh_{\K})V_d$ and the form of $\Delta_{2,\K}^*$, $V_d$ does not depend on the choice of $\alpha, \beta,\gamma,\delta$ in our setting and thus is well defined. Considering the special structure of $\Delta_{2,\K}^*$, we see that in order to understand the behavior of theta lattices under the action of $\Delta_{2,\K}^*$, we only need to study finitely many lattices $\mathcal{A}_d$ corresponding to $V_d$. The unique ideal in $\oh_{\K}$ of norm $d$ is given by
\begin{align*}
 \mathcal{A}_d=\oh_{\K}d+\oh_{\K}(m+\sqrt{-m}).
\end{align*}
Hence, it is sufficient to study $\Lambda'=\frac{1}{\sqrt{d}}\mathcal{A}_d \Lambda$ for all squarefree divisors $d$ of $d_{\K}$.

We consider a specific lattice given by \cite{hentschelnebekrieg}. 
\begin{examp}
Let $d$ be a squarefree divisor of $d_{\K}$. Choose $\alpha,\beta \in \Z$ such that \newline$d+1+\alpha^2+\beta^2\equiv 0 \,(\text{mod }d_{\K})$ and define
\begin{align*}
 u:=\alpha+\beta+\sqrt{-d} \in \oh_{\K},\quad v:=\alpha-\beta+\sqrt{-d}\in \oh_{\K}.
\end{align*}
Then
\begin{align*}
 \Lambda=\oh_{\K}\left(\begin{smallmatrix}1\\1\\0\\0\end{smallmatrix}\right)+\oh_{\K}\left(\begin{smallmatrix}-1\\1\\0\\0\end{smallmatrix}\right)+\oh_{\K}\frac{1}{\sqrt{d_{\K}}}\left(\begin{smallmatrix}u\\v\\1\\1\end{smallmatrix}\right)+\oh_{\K}\frac{1}{\sqrt{d_{\K}}}\left(\begin{smallmatrix}-\overline{v}\\\overline{u}\\-1\\1\end{smallmatrix}\right)
\end{align*}
is a free theta lattice of rank $4$. Computations with magma yield $\frac{1}{\sqrt{d}}\mathcal{A}_d\Lambda\cong \Lambda$ for all squarefree divisors $d$ of $d_{\K}$ for 
\begin{align*}
 m\in \{&1,2,3,5,6,7,10,11,13,14,15,17,19,21,22,23,26,29,31,34,35,37,38,39,41,43,\\
 &46,47,53,55,58,59,61,62,65,67,70,71,73,74,79,82,83,86,89,91,93,94,95,97\}.
\end{align*}
Hence, $\Theta(Z,\Lambda)^{(2)}$ is a modular form of degree $2$ and weight $4$ with respect to $\Delta_{2,\K}^*$ in these cases. By comparing the Fourier expansions, we obtain $\Theta(Z,\Lambda)^{(2)}\neq \Theta(Z,\Lambda')^{(2)}$ for all other admissible $m\leq 100$. These are given by
\begin{align*}
 m\in\{30,33,42,51,57,66,69,77,78,85,87\}.
\end{align*}
In particular, we see that whenever a theta series is a modular form with respect to $\Delta_{2,\K}^*$ for squarefree $m\leq 100$, we immediately have $\Lambda\cong \frac{1}{\sqrt{d}}\mathcal{A}_d \Lambda$ for all squarefree divisors $d$ of $d_{\K}.$
\end{examp}

\noindent\textbf{4. The Maa\ss\;Spaces.} Let $\oh_{\K}^{\star}:=\frac{1}{\sqrt{d_{\K}}}\oh_{\K}$. By \cite{braun2}, \cite{krieg} it is known that any Hermitian modular form of degree $2$ and weight $r\in \Z$ has a Fourier expansion
\begin{align*}
 f(Z)=\sum_{\substack{T\in \Lambda(2,\oh_{\K})\\ T\geq 0}}\alpha_f(T)e^{2\pi i tr(TZ)}, \quad Z\in \H_2,
\end{align*}
with Fourier coefficients $\alpha_f(T)$ and
\begin{align*}
 \Lambda(2,\oh_{\K})=\{T=(t_{\nu \mu})_{\nu,\mu}\in \K^{2\times 2}, \; \overline{T}^{tr}=T, \, t_{\nu\nu}\in \Z,\, t_{\nu\mu}\in \oh_{\K}^{\star}\, \text{for }\mu\neq \nu\}.
\end{align*}
For $T\neq 0$ let $\epsilon(T):=\max\left\{q\in \N ,\; \frac{1}{q}T\in \Lambda(2;\oh_{\K})\right\}$. The Maa\ss\;space by Sugano \cite{sugano}, denoted by $\mathcal{S}(r,\oh_{\K})$, consists of all $f\in[\Gamma_2(\oh_{\K}),r]$ whose Fourier coefficients satisfy 
\begin{align}\label{FourierSugano}
 \alpha_f(T)=\sum_{\substack{\eta \in \N \\ \eta|\epsilon(T)}}\eta^{r-1}\alpha_f\left(\begin{smallmatrix}1&t/\eta\\\overline{t}/\eta &lk/\eta^2\end{smallmatrix}\right)
\end{align}
for all $T=\left(\begin{smallmatrix}k&t\\\overline{t}&l\end{smallmatrix}\right)\in \Lambda(2,\oh_{\K})$, $T\geq 0$, $T\neq 0$. In \cite{krieg}, Krieg defines another Maa\ss\;space, denoted by $\mathcal{M}(\oh_{r,\K})$, of all $f\in[\Gamma_2(\oh_{\K}),r]$ for which there is an $\alpha_f^*:\N_0\to \C$ such that the Fourier coefficients of $f$ satisfy
\begin{align}\label{text:FourierKrieg}
 \alpha_f(T)=\sum_{\substack{\eta \in \N \\ \eta|\epsilon(T)}}\eta^{r-1}\alpha_f^*(-d_{\K}\det(T)/\eta^2)
\end{align}
for all $T\in \Lambda(2,\oh_{\K})$, $T\geq 0$, $T\neq 0$. One immediately sees that Krieg's Maa\ss\;space is contained in Sugano's Maa\ss\;space. Furthermore, the Maa\ss\;spaces coincide whenever $d_{\K}$ is a prime discriminant.

\begin{Lemma}\label{Lemma1}
 For $d|d_{\K}$, $d$ squarefree and $T\in \C^{2\times 2}$ we have 
 \begin{align*}
  T\in \Lambda(2,\oh_{\K}) \quad \Leftrightarrow \quad T':=T[V_d]\in  \Lambda(2,\oh_{\K}).
 \end{align*}
Furthermore, if $T\in  \Lambda(2,\oh_{\K}) $ we obtain $\epsilon(T)=\epsilon(T').$
\end{Lemma}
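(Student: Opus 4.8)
The plan is to read $T\mapsto T':=T[V_d]=\overline{V_d}^{tr}TV_d$ (the bracket being used exactly as in $J[M]=\overline{M}^{tr}JM$) as an $\R$-linear automorphism $\phi_d$ of the space of Hermitian $2\times 2$ matrices: it is a bijection of $\C^{2\times 2}$ because $V_d\in SL_2(\C)$, and a direct check shows $\overline{T'}^{tr}=T'$ whenever $\overline{T}^{tr}=T$. The whole lemma then reduces to the single statement $\phi_d(\Lambda(2,\oh_{\K}))=\Lambda(2,\oh_{\K})$. Indeed, granting this, the equivalence follows at once for Hermitian $T$ (and the non-Hermitian case is vacuous, since $\phi_d$ preserves Hermiticity in both directions, so neither $T$ nor $T'$ lies in $\Lambda(2,\oh_{\K})$); and the claim $\epsilon(T)=\epsilon(T')$ follows by applying the equivalence to $\tfrac1q T$ for each $q\in\N$, noting $(\tfrac1q T)[V_d]=\tfrac1q T'$, so that $\tfrac1q T\in\Lambda(2,\oh_{\K})$ iff $\tfrac1q T'\in\Lambda(2,\oh_{\K})$ and the two matrices admit exactly the same divisors $q$.

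It therefore suffices to prove the inclusion $\phi_d(\Lambda(2,\oh_{\K}))\subseteq\Lambda(2,\oh_{\K})$. This already yields equality: the inverse $V_d^{-1}=\tfrac{1}{\sqrt d}\left(\begin{smallmatrix}\delta d & -\beta(m+\sqrt{-m})\\ -\gamma(m-\sqrt{-m}) & \alpha d\end{smallmatrix}\right)$ has exactly the same shape as $V_d$ (integral parameters, determinant $1$), so the same computation gives $\phi_d^{-1}(\Lambda(2,\oh_{\K}))\subseteq\Lambda(2,\oh_{\K})$, and the two inclusions force $\phi_d(\Lambda(2,\oh_{\K}))=\Lambda(2,\oh_{\K})$. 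To establish the inclusion I would write $T=\left(\begin{smallmatrix}k&t\\\overline t&l\end{smallmatrix}\right)$ with $k,l\in\Z$ and $t\in\oh_{\K}^{\star}$, abbreviate $\pi_{\pm}:=m\pm\sqrt{-m}$ (so $\pi_+\pi_-=m(m+1)$ and $\overline{\pi_+}=\pi_-$), and compute the three independent entries of $\overline{V_d}^{tr}TV_d$. The diagonal entries come out as $k\alpha^2 d+\alpha\gamma(t\pi_-+\overline t\pi_+)+l\gamma^2\tfrac{\pi_+\pi_-}{d}$ and its analogue; these are integers because $t\pi_-\in\oh_{\K}^{\star}$ gives $t\pi_-+\overline{t\pi_-}\in\Z$ and because $d\mid m(m+1)$.

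The crux is the off-diagonal entry, which equals $k\alpha\beta\pi_++l\gamma\delta\pi_++\alpha\delta d\,t+\beta\gamma\tfrac{\pi_+^2}{d}\,\overline t$. The first three summands manifestly lie in $\oh_{\K}^{\star}$, since $\pi_+\in\oh_{\K}\subseteq\oh_{\K}^{\star}$ and $\oh_{\K}^{\star}$ is an $\oh_{\K}$-module, so everything hinges on the last summand, i.e. on showing $\tfrac{\pi_+^2}{d}\in\oh_{\K}$. This is precisely where the hypothesis $d\mid d_{\K}$ is used: for a squarefree divisor $d$ of the discriminant, the norm-$d$ ideal $\mathcal{A}_d=\oh_{\K}d+\oh_{\K}(m+\sqrt{-m})$ is a product of ramified primes and is hence ambiguous, $\mathcal{A}_d=\overline{\mathcal{A}_d}$, so $\mathcal{A}_d^2=\mathcal{A}_d\overline{\mathcal{A}_d}=(d)$. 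Since $\pi_+\in\mathcal{A}_d$ we get $\pi_+^2\in\mathcal{A}_d^2=(d)$, i.e. $\tfrac{\pi_+^2}{d}\in\oh_{\K}$, and therefore $\beta\gamma\tfrac{\pi_+^2}{d}\overline t\in\oh_{\K}\cdot\oh_{\K}^{\star}\subseteq\oh_{\K}^{\star}$.

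I expect the ramification identity $\mathcal{A}_d^2=(d)$ to be the one genuinely arithmetic ingredient and the main obstacle; it is exactly the reason the statement is restricted to $d\mid d_{\K}$ rather than arbitrary $d$. Everything else is bookkeeping with the $\oh_{\K}$-module structure of $\oh_{\K}^{\star}=\tfrac{1}{\sqrt{d_{\K}}}\oh_{\K}$ and the relation $x+\overline x\in\Z$ for $x\in\oh_{\K}^{\star}$, together with the fact that $\sqrt{d_{\K}}\in\oh_{\K}$ guarantees $\oh_{\K}\subseteq\oh_{\K}^{\star}$.
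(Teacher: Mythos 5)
Your proof is correct, but it follows a genuinely different route from the paper's. The paper proves Lemma \ref{Lemma1} by transferring it to the orthogonal setting: under the isomorphism $\Phi$ between $SU(2,2;\C)$ and $O(2,4)$ (chapter 3 of \cite{wernz}, Theorem 3 of \cite{kriegraumwernz}), a matrix $T\in\Lambda(2,\oh_{\K})$ corresponds to an integral vector $(k\;v\;w\;l)^{tr}\in\Z^4$ with $t=v+\omega_{\K}w$, and $T\mapsto T[V_d]$ becomes multiplication by a matrix in $GL_4(\Z)$; membership in $\Lambda(2,\oh_{\K})$ and the value of $\epsilon$ (the content of that integral vector) are then visibly preserved. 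You instead carry out the ``simple computation'' which the paper only mentions as an alternative, and you do it cleanly: reduction to one inclusion via the shape of $V_d^{-1}$, the $\epsilon$-claim via $\frac{1}{q}T$, and, most usefully, isolation of the single arithmetic ingredient --- all prime factors of $\mathcal{A}_d$ are ramified, so $\mathcal{A}_d=\overline{\mathcal{A}_d}$ and $\mathcal{A}_d^2=\mathcal{A}_d\overline{\mathcal{A}_d}=N(\mathcal{A}_d)\oh_{\K}=d\oh_{\K}$, giving $\pi_+^2/d\in\oh_{\K}$. The paper's route buys brevity and a conceptual explanation at the cost of invoking $\Phi$; yours is self-contained and shows exactly where $d\mid d_{\K}$ enters. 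Two facts you assert without proof are true but deserve a line each: every squarefree $d\mid d_{\K}$ divides $2m$ and $m(m+1)$ is even, so $d\mid m(m+1)$; and $\oh_{\K}^{\star}=\frac{1}{\sqrt{d_{\K}}}\oh_{\K}$ is the inverse different, so for $x=\frac{a+b\omega_{\K}}{\sqrt{d_{\K}}}$ one computes $x+\overline{x}=b\in\Z$ in both cases of $d_{\K}$.
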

\begin{proof}
The proof is very intuitive when applying the isomorphism $\Phi$ between $SU(2,2;\C)$ and the orthogonal group $O(2,4)$ in chapter $3$ of \cite{wernz} and Theorem $3$ of \cite{kriegraumwernz}, respectively, and considering the orthogonal instead of the Hermitian setting. Here, the matrix $T$ becomes the vector $\left(\begin{smallmatrix}k&v&w&l\end{smallmatrix}\right)^{tr}\in \R^4$ with $v+\omega_{\K} w=t$ and the action of $V_d$ becomes a multiplication by a matrix in $GL_4(\Z)$ by said isomorphism. Alternatively, Lemma \ref{Lemma1} can be verified by a simple computation.
\end{proof}

For $T=\left(\begin{smallmatrix}k&t\\\overline{t}&l\end{smallmatrix}\right) \in \Lambda(2,\oh_{\K}) $ we write $\alpha_f(k,t,l):=\alpha_f(T)$. We use this notation in the following Lemma. 

\begin{Lemma}\label{Lemma2}
 Let $r\in\Z$, $f\in \mathcal{S}(r,\oh_{\K})$ and $d|m$. Then the following are equivalent:
 \begin{enumerate}[(i)]
  \item $f(Z[\overline{V_d}^{tr}])=f(Z)$,
  \item $\alpha_f(T[V_d^{-1}])=\alpha_f(T)$ for all $T\in  \Lambda(2,\oh_{\K}) $,
  \item $\alpha_f(1,t,l)=\alpha_f(1,t',l')$ if $l-t\overline{t}=l'-t'\overline{t'}$ and
  \begin{align}\label{text3}
   \Re(t')- \Re(t) \in \Z; \quad 2\sqrt{m}\Im(t') \equiv \begin{cases} 2\sqrt{m}\Im(t) \quad &(\text{mod }2m/d), \\ -2\sqrt{m}\Im(t),\quad &(\text{mod }2d).  \end{cases}
  \end{align}
 \end{enumerate}
\end{Lemma}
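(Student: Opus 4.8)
The plan is to establish the two equivalences (i)$\Leftrightarrow$(ii) and (ii)$\Leftrightarrow$(iii) separately; only the latter uses that $f$ lies in Sugano's space. For (i)$\Leftrightarrow$(ii), observe first that $V_d\in SL_2(\C)$ forces the automorphy factor $\det(\overline{V_d}^{-tr})^{-r}=1$, so (i) says precisely that $f$ is invariant under $W_d$. Inserting the Fourier expansion gives $f(Z[\overline{V_d}^{tr}])=f(V_dZ\overline{V_d}^{tr})=\sum_{T}\alpha_f(T)\,e^{2\pi i\operatorname{tr}(TV_dZ\overline{V_d}^{tr})}$, and by cyclicity of the trace $\operatorname{tr}(TV_dZ\overline{V_d}^{tr})=\operatorname{tr}(T[V_d]\,Z)$ with $T[V_d]=\overline{V_d}^{tr}TV_d$. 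By Lemma~\ref{Lemma1} the map $T\mapsto T[V_d]$ is a bijection of $\Lambda(2,\oh_{\K})$ with inverse $S\mapsto S[V_d^{-1}]$, and it preserves positivity, so re-indexing $S=T[V_d]$ and comparing Fourier coefficients shows that $W_d$-invariance is equivalent to $\alpha_f(S[V_d^{-1}])=\alpha_f(S)$ for all $S$, i.e.\ (ii). I expect this step to be routine.

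For (ii)$\Leftrightarrow$(iii) I would first reduce to primitive indices. Since $f\in\mathcal{S}(r,\oh_{\K})$, the Maass relation (\ref{FourierSugano}) expresses every $\alpha_f(T)$ through the coefficients $\alpha_f(1,t,l)$ with first diagonal entry $1$, for which $\epsilon=1$. As $\det V_d=1$ and $\epsilon(T)=\epsilon(T[V_d^{-1}])$ by Lemma~\ref{Lemma1}, the substitution $T\mapsto T[V_d^{-1}]$ preserves both determinant and content, so I expect (ii) to be equivalent to its restriction to primitive $T=(1,t,l)$: the forward direction is immediate restriction, and the Maass relation transports the primitive identities back to arbitrary $T$, the two index sets coinciding by $\epsilon(T)=\epsilon(T[V_d^{-1}])$.

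It then remains to turn (ii) for primitive $T$ into the explicit congruences of (iii). I would write $T=\left(\begin{smallmatrix}1&t\\\overline t&l\end{smallmatrix}\right)$, compute $T[V_d^{-1}]$ from $V_d^{-1}=\operatorname{adj}(V_d)$ in terms of $\alpha,\beta,\gamma,\delta,d,m$, and reduce the result back to a primitive coefficient $\alpha_f(1,t',l')$ via (\ref{FourierSugano}); preservation of the determinant yields $l'-t'\overline{t'}=l-t\overline t$, the first condition in (iii). Two facts streamline the bookkeeping: the Fourier coefficients of any form in $[\Gamma_2(\oh_{\K}),r]$ are invariant under $T\mapsto\overline U^{tr}TU$ for $U\in GL_2(\oh_{\K})$, so at fixed determinant $\alpha_f(1,t,l)$ depends only on $t$ modulo $\oh_{\K}$ (this produces the condition $\Re(t')-\Re(t)\in\Z$ and reduces $2\sqrt m\,\Im(t)$ to a residue); and the independence of $V_d$ from the choice of $\alpha,\beta,\gamma,\delta$ established earlier guarantees the identification is well defined. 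The genuine effect of $V_d$ is then the sign change $\Im(t)\mapsto-\Im(t)$ together with the passage to the moduli $2m/d$ and $2d$ with the indicated signs.

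The hard part will be the number-theoretic verification that the orbit of a primitive $(1,t,l)$ under the group generated by $V_d$ and the unimodular substitutions is exactly the set of $(1,t',l')$ of the same determinant satisfying (\ref{text3}). The ``only if'' direction, that $T[V_d^{-1}]$ forces $2\sqrt m\,\Im(t')\equiv 2\sqrt m\,\Im(t)\pmod{2m/d}$ and $\equiv-2\sqrt m\,\Im(t)\pmod{2d}$, is a direct computation using $\det V_d=1$ and $d\mid m$. The ``if'' direction, that every $(t',l')$ meeting both congruences is actually reached, needs a Chinese-remainder argument matching the two moduli $2m/d$ and $2d$ with a transitivity statement on a fixed-determinant class, together with the termwise compatibility of the transport step in the previous paragraph; this is where I expect the main difficulty, and where the hypothesis $d\mid m$ is used essentially.
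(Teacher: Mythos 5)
Your proposal is correct and follows essentially the same route as the paper's proof: (i)$\Leftrightarrow$(ii) by uniqueness of the Fourier expansion together with Lemma~\ref{Lemma1}, reduction to coefficients with $k=1$ via the Sugano relation (\ref{FourierSugano}) and $\oh_{\K}$-shifts, an explicit computation showing the Atkin--Lehner substitution produces exactly the congruences (\ref{text3}), and a Chinese remainder argument for the converse direction. The only difference is cosmetic: where you compute $T[V_d^{-1}]$ directly (which does work, using $\det V_d=1$ and $d\mid m$), the paper first replaces $V_d$ by an auxiliary matrix $V_d'$ with purely imaginary off-diagonal entries --- legitimate since $V_dV_d'\in SL_2(\oh_{\K})$ when $d\mid m$ --- which makes the same congruence computation cleaner.
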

\begin{proof}
 The proof is based on the proof of Theorem $2$ in \cite{heimkrieg}. The equivalence of $(i)$ and $(ii)$ follows from the uniqueness of the Fourier expansion. In order to prove the equivalence of $(ii)$ and $(iii)$ we first acknowledge the form of $\alpha_f(T)$ in (\ref{FourierSugano}), i.e. it suffices to consider $k=1$, as well as $\alpha_f(1,t+u,l')=\alpha_f(1,t,l)$ for $u \in \oh_{\K}$ and suitable $l'$. Now assume that $(iii)$ holds. We consider
 \begin{align*}
  V_d':=\frac{1}{\sqrt{d}}\begin{pmatrix}\alpha' d& \beta' \sqrt{-m} \\ -\gamma' \sqrt{-m}& \delta' d \end{pmatrix}\in SL_2(\C), \quad \alpha',\beta', \gamma',\delta'\in \Z.
 \end{align*}
For $d|m$ we have $W_d'=\left(\begin{smallmatrix}V_d'&0\\0&\overline{V_d'}^{tr}\end{smallmatrix}\right)\in \Delta_{2,\K}^*$ and $V_dV_d'\in SL_2(\oh_{\K})$ which is not necessarily true for $m \equiv 1$ $($mod $4)$ and $2|d$. Using this and a suitable shift about $u\in \oh_{\K}$ one finds a $T'$ with Fourier coefficient $\alpha_f(1,t',l')=\alpha_f(T[V_d'^{-1}])$ which satisfies (\ref{text3}). Applying $(iii)$ yields the result. Now assume that $(ii)$ holds. For $T,T' \in \Lambda(2,\oh_{\K})$ which satisfy (\ref{text3}) we find $T''$ as before, i.e. $T^{''}$ with $\alpha_f(1,t^{''},l^{''})=\alpha_f(T[V_d'^{-1}])$ which satisfies (\ref{text3}) and apply the Chinese remainder Theorem to obtain $(iii)$.
\end{proof}

With this Lemma we are now able to prove the connection between Sugano's and Krieg's Maa\ss\;spaces in the following Theorem.

\begin{Theorem}\label{Theorem3}
 A Maa\ss\;form $f\in \mathcal{S}(r,\oh_{\K})$ belongs to $\mathcal{M}(r,\oh_{\K})$ if and only if $f\in [\Delta_{2,\K}^*,r]$.
\end{Theorem}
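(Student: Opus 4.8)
The plan is to recast both memberships as conditions on the Fourier coefficients $\alpha_f$ and then to compare them. First I would record two reductions. Since $f\in\mathcal{S}(r,\oh_{\K})\subseteq[\Gamma_2(\oh_{\K}),r]$ and $\Delta_{2,\K}^*=\bigcup_{d}\Gamma_2(\oh_{\K})W_d$, while for $W_d=\left(\begin{smallmatrix}V_d&0\\0&\overline{V_d}^{-tr}\end{smallmatrix}\right)$ the automorphy factor equals $\det(\overline{V_d}^{-tr})^{-r}=1$ because $V_d\in SL_2(\C)$, membership $f\in[\Delta_{2,\K}^*,r]$ is equivalent to $f(Z[\overline{V_d}^{tr}])=f(Z)$ for every squarefree $d\mid d_{\K}$, that is, to condition $(i)$ of Lemma~\ref{Lemma2} for all such $d$. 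Moreover, any $T=\left(\begin{smallmatrix}1&t\\\overline{t}&l\end{smallmatrix}\right)$ has $\epsilon(T)=1$, so evaluating (\ref{text:FourierKrieg}) at such $T$ shows that $f\in\mathcal{M}(r,\oh_{\K})$ is equivalent to $f\in\mathcal{S}(r,\oh_{\K})$ together with $\alpha_f(1,t,l)$ depending only on $\det\left(\begin{smallmatrix}1&t\\\overline{t}&l\end{smallmatrix}\right)=l-t\overline{t}$; conversely this dependence lets one define $\alpha_f^*$ and recover (\ref{text:FourierKrieg}) from the Sugano relation (\ref{FourierSugano}), since the reduction there scales the determinant by $\eta^{-2}$.

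For the implication $f\in\mathcal{M}(r,\oh_{\K})\Rightarrow f\in[\Delta_{2,\K}^*,r]$ I would argue on coefficients. By Lemma~\ref{Lemma1} the assignment $T\mapsto T[V_d]$ is a bijection of $\{T\in\Lambda(2,\oh_{\K}):T\geq0\}$ preserving $\epsilon$, and $V_d\in SL_2(\C)$ forces $\det(T[V_d])=\det(T)$. Thus both $-d_{\K}\det$ and $\epsilon$ are invariant under $V_d$ and $V_d^{-1}$, so (\ref{text:FourierKrieg}) gives $\alpha_f(T[V_d^{-1}])=\alpha_f(T)$ for all $T$; arguing as in Lemma~\ref{Lemma2} (uniqueness of the Fourier expansion together with the bijection of Lemma~\ref{Lemma1}) this is equivalent to $f(Z[\overline{V_d}^{tr}])=f(Z)$, now for every squarefree $d\mid d_{\K}$. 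With the first reduction this gives $f\in[\Delta_{2,\K}^*,r]$. Notably this direction uses only Lemma~\ref{Lemma1} and so also covers the divisors $d\mid d_{\K}$ with $d\nmid m$.

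The reverse implication is the substantial one. Assuming $f\in[\Delta_{2,\K}^*,r]$, the first reduction and Lemma~\ref{Lemma2} yield condition $(iii)$ for every $d\mid m$. Because $f\in\mathcal{S}(r,\oh_{\K})\subseteq[\Gamma_2(\oh_{\K}),r]$, the coefficients are already constant on the $GL_2(\oh_{\K})$-classes of forms $T[U]=\overline{U}^{tr}TU$; after reducing $t$ modulo $\oh_{\K}$ as in the proof of Lemma~\ref{Lemma2}, the $k=1$ forms of a fixed determinant are thereby labelled by the residue $y:=2\sqrt{m}\,\Im(t)$, which, once $\det T$ is fixed, is constrained to the square roots of a fixed quantity (the determinant depending on $y$ only through $y^2$). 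Condition $(iii)$ says that the move by $V_d$ replaces $y$ by a value congruent to $y$ modulo $2m/d$ and to $-y$ modulo $2d$; letting $d$ run through the squarefree divisors and combining the resulting sign changes by the Chinese remainder theorem makes these moves act transitively on the set of admissible square roots. Hence all $k=1$ forms of a given determinant share one coefficient value, which by the first step is exactly $f\in\mathcal{M}(r,\oh_{\K})$.

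I expect the main obstacle to be precisely this transitivity, and within it the prime $2$. The sign changes at odd primes $p\mid m$ are produced by $d=p$, for which Lemma~\ref{Lemma2} applies; but $\Delta_{2,\K}^*$ also involves divisors $d\mid d_{\K}=-4m$ with $2\mid d$ when $m\not\equiv3\ (\mathrm{mod}\ 4)$, and Lemma~\ref{Lemma2} is stated only for $d\mid m$. The $2$-adic sign must therefore be extracted with extra care, in line with the caveat in the proof of Lemma~\ref{Lemma2} that $V_dV_d'\in SL_2(\oh_{\K})$ can fail for $m\equiv1\ (\mathrm{mod}\ 4)$ and $2\mid d$. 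Checking that the congruences in (\ref{text3}), over all admissible $d$, realise every sign pattern of $y$ compatible with a fixed determinant --- equivalently, that the Atkin--Lehner involutions permute the relevant form classes transitively --- is where the real work lies; granting this, the two implications combine to the asserted equivalence.
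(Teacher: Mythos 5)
Your framework and your forward direction are sound and coincide with the paper's: membership in $[\Delta_{2,\K}^*,r]$ reduces to $f(Z[\overline{V_d}^{tr}])=f(Z)$ for every squarefree $d\mid d_{\K}$ (trivial automorphy factor since $\det V_d=1$); membership in $\mathcal{M}(r,\oh_{\K})$ for $f\in\mathcal{S}(r,\oh_{\K})$ reduces to $\alpha_f(1,t,l)$ depending only on $l-t\overline{t}$; and the implication $\mathcal{M}(r,\oh_{\K})\Rightarrow[\Delta_{2,\K}^*,r]$ follows from Lemma~\ref{Lemma1} (invariance of $\Lambda(2,\oh_{\K})$, of $\epsilon$ and of $\det$ under $T\mapsto T[V_d]$) together with uniqueness of the Fourier expansion, valid for all $d\mid d_{\K}$ exactly as you observe.

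The reverse implication, however, is not proven in your proposal: you reduce it to the claim that the congruences (\ref{text3}), over all admissible $d$, act transitively on the $k=1$ forms of fixed determinant, and then write ``granting this.'' That claim is the entire content of the direction, and the paper establishes it by two concrete devices that your text lacks. First, no transitivity over several moves is needed: after shifting so that $\Re(t)=\Re(t')=0$ (possible whenever $\Re(t)-\Re(t')\in\Z$, in particular always for $m\equiv3\ (\text{mod }4)$), write $t=it_0/(2\sqrt{m})$, $t'=it_0'/(2\sqrt{m})$; equal determinants force $t_0^2\equiv t_0'^2\ (\text{mod }4m)$, hence $t_0\equiv t_0'\ (\text{mod }2)$, and the single explicit divisor $d=\prod_{p\mid m,\ p\mid(t_0+t_0')}p$ makes the pair $(t,t')$ satisfy (\ref{text3}); one application of Lemma~\ref{Lemma2}(iii), available because this $d$ divides $m$, gives $\alpha_f(T)=\alpha_f(T')$. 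Second, the case you rightly flag as the obstacle --- $m\equiv1\ (\text{mod }4)$ with $\Re(t)-\Re(t')\in\frac12\Z\setminus\Z$, which genuinely occurs --- cannot be reached by Lemma~\ref{Lemma2} at all, since that lemma requires $d\mid m$ while the needed divisor is $d=2\nmid m$. The paper's fix is not a refined congruence analysis but a one-step reduction: from $f\in[\Delta_{2,\K}^*,r]$ and uniqueness of the Fourier expansion (again only Lemma~\ref{Lemma1} is needed, as in the forward direction) one has $\alpha_f(T'[V_2^{-1}])=\alpha_f(T')$ directly, and $T'[V_2^{-1}]$ has the same determinant as $T'$ but real-part difference from $t$ lying in $\Z$, so the first device applies to $T$ and $T'[V_2^{-1}]$. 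Your proposal names the $2$-adic difficulty but supplies no mechanism for it; without the explicit choice of $d$ and this $V_2$-conjugation, the equivalence is not established.
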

\begin{proof}
 As for Lemma \ref{Lemma2}, this proof is based on a proof for the paramodular group in \cite{heimkrieg}.\\
 $"\Rightarrow"$: Let $f\in \mathcal{M}(r,\oh_{\K})$. We obtain $\alpha_f(T[V_d^{-1}])=\alpha_f(T)$ for all $T \in \Lambda(2,\oh_{\K})$ by considering the identity 
 \begin{align*}
\alpha_f(T)=\sum_{\substack{\eta \in \N \\ \eta|\epsilon(T)}}\eta^{r-1}\alpha_f^*(-d_{\K}\det(T)/\eta^2)  
 \end{align*}
and applying Lemma \ref{Lemma1}. \\
$"\Leftarrow"$: It suffices to show that for $f\in \mathcal{S}(r,\oh_{\K})\cap [\Delta_{2,\K}^{*},r]$
\begin{align*}
 \alpha_f(1,t,l)=\alpha_f(1,t',l') \quad\text{holds whenever}\quad \det(T)=\det(T').
\end{align*}
 Let $m\equiv 3 \;(\text{mod }4)$ and consider $T,T'$ with $\det(T)=\det(T')$. As we can shift $t,t'$ about $\oh_{\K}$ without affecting the Fourier coefficient, we can assume $\Re(t)=\Re(t')=0$. Using this and $t,t' \in \oh_{\K}^{\star}$, there are $t_0,t_0' \in \Z$ such that
\begin{align*}
 t=i\frac{1}{2\sqrt{m}}t_0, \quad t'=i\frac{1}{2\sqrt{m}}t_0'.
\end{align*}
$\det(T)=\det(T')$ then yields $t_0\equiv t_0' \;(\text{mod }2)$. We define 
\begin{align*}
 d:=\prod_{\substack{p \in \mathbb{P}\\p|m\\p|(t_0+t_0')}}p. 
\end{align*}
Then $f\in [\Delta_{2,\K}^*,r]$ and Lemma \ref{Lemma2} yield $\alpha_f(T')=\alpha_f(T)$. Now let $m \not\equiv 3 \;(\text{mod }4)$. If $\Re(t)-\Re(t')\in \Z$, we can proceed as above. Now consider $\Re(t)-\Re(t')\in \frac{1}{2}\Z\backslash\Z$. Applying $\det(T)=\det(T')$ and the form of $\oh_{\K}^{\star}$, we can infer $m\equiv 1\;\text{(mod }4)$. As can be seen in the example
\begin{align*}
 T=\begin{pmatrix} 1& \frac{1}{2\sqrt{-5}}5\\-\frac{1}{2\sqrt{-5}}5&l\end{pmatrix}, \quad T'=\begin{pmatrix}1&\frac{1}{2}\\\frac{1}{2}&l-1\end{pmatrix}
\end{align*}
for $m=5$ and $l>2$, the case $\Re(t)-\Re(t')\in \frac{1}{2}\Z\backslash \Z$ can occur. To resolve this, we consider $T'[V_2^{-1}]$. Because of $f\in [\Delta_{2,\K}^*,r]$ we obtain $\alpha_f(T'[V_2^{-1}])=\alpha_f(T')$. Furthermore, we have $\det(T)=\det(T'[V_2^{-1}])$ and $\Re(t)-\Re(t^{''})\in \Z$ where $T'[V_2^{-1}]=\left(\begin{smallmatrix} \ast&t^{''}\\ \ast&\ast\end{smallmatrix}\right)$. Hence, we can consider $T$ and $T'[V_2^{-1}]$ and obtain $\alpha_f(T)=\alpha_f(T'[V_2^{-1}])=\alpha(T')$ as in the previous cases.
\end{proof}

%%%%%%%%%%%%%%%%%%%%%%%%%%%%%%%%%%%%%%%%%%%%

\bigskip
\setlength{\parindent}{0pt}
{\small \textbf{Acknowledgement.} The author thanks Aloys Krieg for the valuable input and support as well as Markus Kirschmer and Simon Eisenbarth for their support regarding magma.} 

%============================================

\nocite{Koe}  
\bibliographystyle{plain}
\renewcommand{\refname}{Bibliography}

\end{document}